\title{Polyhedral Gauss--Bonnet theorems and valuations}
\author{Rolf Schneider}
\date{}
\newcommand{\Sn}{{\mathbb S}^{n-1}}
\newcommand{\R}{{\mathbb R}}
\newcommand{\Pn}{{\mathcal P}^n}
\newcommand{\cP}{{\mathcal P}}
\newcommand{\Rn}{{\mathbb R}^n}
\newcommand{\N}{{\mathbb N}}
\newcommand{\cZ}{\mathcal{Z}}
\newcommand{\D}{{\rm d}}
\newcommand{\F}{{\mathcal F}}
\newcommand{\U}{{\sf U}}
\newcommand{\PC}{{\mathcal PC}}
  \renewcommand{\dim}{{\rm dim}\,}
\newtheorem{theorem}{Theorem}
\newtheorem{lemma}{Lemma}
\begin{document}
\maketitle

\begin{abstract} 
The Gauss--Bonnet theorem for a polyhedron (a union of finitely many compact convex polytopes) in $n$-dimensional Euclidean space expresses the Euler characteristic of the polyhedron as a sum of certain curvatures, which are different from zero only at the vertices of the polyhedron. This note suggests a generalization of these polyhedral vertex curvatures, based on valuations, and thus obtains a variety of polyhedral Gauss--Bonnet theorems.

\vspace{1mm}

\noindent{\bf Keywords:} Gauss--Bonnet theorem, polyhedron, polyhedral curvature, valuation, Critical point theorem

\vspace{1mm}

\noindent{\bf Mathematics Subject Classification:} 52B05, 52B45, 52B70

\end{abstract}

\section{Introduction}\label{sec1}

Very roughly, the essence of the Gauss--Bonnet theorem can be seen in the fact that the Euler characteristic of a suitable space can be obtained by adding up local geometric information. In the classical differential-geometric versions, `adding up' means integrating a curvature function, possibly with suitable boundary contributions. Federer's \cite{Fed59} theory of curvature measures includes a Gauss--Bonnet theorem for sets of positive reach, which comprise compact smooth submanifolds on one hand and, for example, convex bodies on the other hand. In particular, for a convex polytope $P$ in $\Rn$, the Gauss--Bonnet theorem has the form
\begin{equation}\label{1n} 
\sum_{x\in P} \Gamma(P,x)=\chi(P),
\end{equation}
where $\chi$ denote the Euler characteristic. By $\Gamma(P,x)$ we have denoted the outer angle of $P$ at $x$, which is the proportion of the volume that the normal cone of $P$ at $x$ cuts out from the unit ball. Thus, $\Gamma(P,x)\not=0$ only if $x$ is a vertex of $P$. The fact that Federer's curvature measures are weakly continuous under approximation shows that, for polytopes, (\ref{1n}) is the `correct' counterpart to the classical Gauss--Bonnet theorem. (Compare also the approximation theorem of Brehm and K\"uhnel \cite{BK82}.) For polyhedral cell complexes, a Gauss--Bonnet theorem was established by Banchoff \cite{Ban67}. 
The work of Cheeger, M\"uller and Schrader \cite{CMS84} on Lipschitz--Killing curvatures for piecewise flat spaces contains an approximation theorem as well as a Gauss--Bonnet theorem. Part of this work was simplified and generalized by Budach \cite{Bud89}; this, in turn, was extended by Chen \cite{Che96}. It has occasionally been remarked (e.g., \cite{McM90}) that parts of \cite{CMS84, Bud89} can be simplified by exploiting the valuation property of the functionals under consideration. With this valuation property, formula (\ref{1n}) immediately extends to polyhedra. By a {\em polyhedron} we understand here the union of finitely many (compact, convex) polytopes in $\Rn$. As the functions $\Gamma(\cdot,x)$ (for each fixed $x\in\Rn$) and $\chi$ are additive (valuations) on the polytopes in $\Rn$ and hence have additive extensions to the polyhedra in $\Rn$, the Gauss--Bonnet formula (\ref{1n}) holds for arbitrary polyhedra $P$. 

This raises a few questions, with which we will be concerned in the following. The first natural question is that for simple explicit representations of the extension $\Gamma(P,x)$ for a polyhedron $P$. The definition by extension directly only yields the representation by the clumsy inclusion-exclusion principle: if $P=P_1\cup\dots\cup P_m$ with convex polytopes $P_1,\dots,P_m$, then 
$$ \Gamma(P,x) = \sum_{r=1}^m (-1)^{r-1} \sum_{1\le i_1<\dots< i_r\le m} \Gamma(P_{i_1}\cap\dots\cap P_{i_r},x).$$

Let $\cZ$ be a polyhedral cell complex in $\Rn$, let $|\cZ|$ denote its underlying space and ${\rm vert}\cZ$ its set of vertices. With the definition 
$$G(\cZ,x)= \sum_{Z\in\cZ} (-1)^{\dim Z} \Gamma(Z,x),$$ 
the Gauss--Bonnet type formula
\begin{equation}\label{2n}
\sum_{x\in {\rm vert}\cZ} G(\cZ,x)=\chi(|\cZ|)
\end{equation}
holds. Brin \cite{Bri48} stated this without proof, together with the fact that $G(\cZ,x)$ depends only on $|\cZ|$ and $x$. Formula (\ref{2n}) appears at several places in the literature, see \cite[Theorem 4]{Ban67}, \cite[(3.7)]{CMS84}, \cite[p. 388]{Blo98}, \cite[p. 75]{Mor08}, without mention that $G(\cZ,x)$ is a quantitity of the polyhedron $|\cZ|$, not depending on the particular decomposition. Using the valuation property, the latter is easy to see: it follows from Lemma 3 below that $G(\cZ,x)=\Gamma(|\cZ|,x)$. 

We mention that the outer angle $\Gamma(P,x)$ of a polytope $P$ can be expressed in terms of inner angles; this was (in the general case) first shown by McMullen \cite[Theorem 4]{McM75}.

For polyhedra $P\subset\Rn$, Hadwiger \cite{Had69} defined a vertex curvature $K(P,x)$ and proved a corresponding Gauss--Bonnet theorem. By Section 3, also $K(\cdot,x)$ coincides with the additively extended angle function $\Gamma(\cdot,x)$.

For polyhedra $P\subset\Rn$, the Critical point theorem takes a form very similar to that of the Gauss--Bonnet theorem, namely
$$ \sum_{x\in P} i(P,x,\xi)=\chi(P).$$
Here $i(P,x,\xi)$ denotes the index of $P$ at $x$ with respect to the height function determined by the vector $\xi$ (assuming that the critical points are non-degenerate in a suitable sense). We refer to Banchoff \cite{Ban67} for a version of this theorem for polyhedral cell complexes. A more general formula, for polyconvex sets (finite unions of convex bodies), appears in \cite{Sch77}; see also \cite[formula (4.49)]{Sch14}. 

All this leads to our second and main question: how general can a `vertex curvature' for polyhedra be defined so that a Gauss--Bonnet theorem holds? We suggest the following definition.

\vspace{2mm}

\noindent{\bf Definition 1.} A {\em polyhedral vertex curvature} (in $\R^n$) is a real function $\kappa$ of pairs $(P,x)$ of polyhedra $P\subset\R^n$ and points $x\in\R^n$, with the following properties.\\
(1) If $N$ is a neighborhood of $x$, then $\kappa(P,x)$ depends only on $P\cap N$;\\
(2) $\kappa(P,x)\not=0$ for only finitely many points $x$ of $P$, and
$$ \sum_{x\in P}\kappa(P,x)=\chi(P)$$ 
for each polyhedron $P$.

\vspace{2mm}

The purpose of the following section is to construct, by means of valuations, a wealth of polyhedral vertex curvatures (see Theorem 1 in Section 2), and thus to obtain many Gauss--Bonnet theorems for polyhedra. Here the valuation property of the Euler characteristic is in the focus, and not the fact that it is a topological invariant.

A minor side question asks for the nature of the points $x$ with $\kappa(P,x)\not=0$. They should be among the vertices of $P$, but for a non-convex polyhedron, `vertex' is not an unambiguous notion. For example, the recent article \cite{ABR17} mentions `combinatorial vertices' and deals with `algebraic vertices'. What we need here, are `geometric vertices'; see Section 3.

What we are not considering here, are combinatorial analogs of the Gauss--Bonnet theorem (though \cite{Kla16} was an incentive for the present consideration). For example, let $\mathcal Z$ be a finite cell complex in $\R^n$ (the cells are convex polytopes), and let $\Delta^k$ denote the set of $k$-dimensional cells. Then a `combinatorial curvature' of $\mathcal Z$ at $x$ can be defined by
$$ C({\mathcal Z},x) = \sum_{k=0}^n (-1)^k \sum_{x\in Z\in\Delta^k} \frac{1}{f_0(Z)},$$
where $f_0$ denotes the number of vertices. One can show in a few lines (see \cite{Sch77a}) that
\begin{equation}\label{1.1} 
\sum_{\{x\}\in\Delta^0} C(\mathcal Z,x) =\chi(|\mathcal Z|),
\end{equation}
where $|\mathcal Z|=\bigcup_{k=0}^n\bigcup_{Z\in\Delta^k} Z$ denotes the underlying space of $\mathcal Z$. In a recent paper by Klaus \cite{Kla16},  Theorem 2.1 is formula (\ref{1.1}) above for the special case of simplicial complexes. Theorem 5.1 of that paper uses, for Euclidean simplicial complexes, a differently defined vertex curvature, involving internal angles. This vertex curvature is a difference of two terms (if the definitions on pages 1356, 1355 are taken together), of which the first one, after summation over the vertices, yields the Euler characteristic (due to Theorem 2.1), whereas the second, after summation over the vertices, vanishes simplex-wise (due to Gram's relation for simplices). Thus, similarly as in what Gr\"unbaum and Shephard \cite{GS91} called Descartes' theorem in $n$ dimensions, the influence of the interior angles disappears after summation, due to Gram's theorem, and what remains is the purely combinatorial Euler--Poincar\'e formula. 

If a polyhedron $P$ is given, it can be triangulated, so that it becomes the underlying space of a simplicial complex. The Euler characteristic of $P$ can conveniently be calculated via the simplicial complex, but depends only on $P$ and not on the chosen triangulation. However, if we want to determine the Euler characteristic of $P$ by summing vertex curvatures according to one of the methods in \cite{Sch77a, Kla16}, then the vertex curvatures do depend on the special triangulation. In contrast, a polyhedral vertex curvature of $P$ at $x$ according to Definition 1 depends only on the set $P$, in arbitrarily small neighborhoods of $x$.

For simplex-wise embedded simplicial complexes, Bloch \cite{Blo98} has introduced a modification of vertex curvatures, called `stratified curvature', and has proved a Gauss--Bonnet type theorem with a modified Euler characteristic, which is not a valuation. As Example 4.2 in \cite{Blo98} shows, this stratified curvature depends esentially on the simplicial complex, in so far as it is not subdivision invariant at the vertices.

Chen's \cite{Che92} generalized Gauss--Bonnet formula holds for general polyhedra, which need neither be closed nor bounded. However, his curvature functions depend on a face system that is defined via a disjoint decomposition of the polyhedron into relatively open convex polytopes, and hence they are not notions that are defined only and directly by the polyhedron.

\section{Polyhedral vertex curvatures}\label{sec2}

In the following, the scalar product of $\R^n$ ($n\ge 2$) is denoted by $\langle\cdot\,,\cdot\rangle$, and $\Sn=\{u\in\R^n:\langle u,u\rangle =1\}$ is the unit sphere. For $u\in \Sn$, we write 
$$ u^\perp =\{x\in \R^n: \langle x,u\rangle=0\},\qquad  u^+ =\{x\in \R^n: \langle x,u\rangle>0\}.$$
By $[x,y]$ we denote the closed segment with endpoints $x,y$,

Let $P\subset\R^n$ be a polyhedron, and let $x\in\R^n$. We define
$$ {\rm Tan}(P,x):= \{v\in\R^n: [x,x+\lambda v]\subseteq P\mbox{ for some }\lambda>0\}.$$
Then ${\rm Tan}(P,x)=\emptyset$ for $x\notin P$, and if $x\in P$, then ${\rm Tan}(P,x)$ is a cone, that is, $o\in P$, and $v\in {\rm Tan}(P,x)$ implies $\mu v\in {\rm Tan}(P,x)$ for $\mu\ge 0$. Since $P$ is a finite union of polytopes, ${\rm Tan}(P,x)$ is the usual tangent cone of $P$ at $x$ and is a finite union of convex polyhedral cones. A convex polyhedral cone is the intersection of a finite family of closed halfspaces with $o$ in the boundary. The family may be empty, so that also $\R^n$ is considered as a polyhedral cone. 

We need some preparations concerning valuations.

Let ${\mathscr S}$ be an intersectional family of sets (i.e., $A,B\in{\mathscr S}$ implies $A\cap B\in{\mathscr S}$). A function $\varphi:{\mathscr S}\to\R$ is called {\em additive} or a {\em valuation} if
\begin{equation}\label{2.1} 
\varphi(A\cup B) + \varphi(A\cap B)= \varphi(A)+\varphi(B)
\end{equation}
whenever $A,B,A\cup B\in{\mathscr S}$, and $\varphi(\emptyset)=0$ if $\emptyset\in{\mathscr S}$. The function $\varphi$ is said to be {\em fully additive} if 
\begin{equation}\label{2.2} 
\varphi(A_1\cup\dots\cup A_m)= \sum_{r=1}^m (-1)^{r-1} \sum_{1\le i_1<\dots< i_r\le m }\varphi(A_{i_1}\cap\dots\cap A_{i_r})
\end{equation}
for all $m\in\N$ and all $A_1,\dots,A_m\in {\mathscr S}$ with $A_1\cup\dots\cup A_m\in{\mathscr S}$. By $\U({\mathscr S})$ we denote the set of all finite unions of elements from ${\mathscr S}$, together with the empty set. The triple $(\U({\mathscr S}),\cup,\cap)$ is a lattice. Every fully additive function is a valuation, and if $\varphi$ is a valuation on $\U({\mathscr S})$, then it is fully additive (by induction). 
In the following, we consider valuations on different intersectional families: the family $\Pn$ of convex polytopes in $\R^n$, together with the empty set, the family  $\U(\Pn)$  of polyhedra, the family $\PC^n$ of polyhedral cones in $\R^n$, and the family $\U(\PC^n)$ of their finite unions, together with the empty set. 

If $H$ is a hyperplane in $\R^n$, we denote by $H^+,H^-$ the two closed halfspaces bounded by $H$. A function $\varphi:\Pn\to\R$ is {\em weakly additive} if 
$$ \varphi(P) =\varphi(P\cap H^+)+\varphi(P\cap H^-) -\varphi(P\cap H)$$
holds for each $P\in\Pn$ and each hyperplane $H$ meeting $P$. Every valuation on $\Pn$ is weakly additive, and conversely the following holds (e.g., \cite[Thm. 6.2.3]{Sch14}, where references are given in Note 1).

\begin{lemma}\label{L2.2}
Every weakly additive real function on $\Pn$ is fully additive and has an additive extension to $\U(\Pn)$.
\end{lemma}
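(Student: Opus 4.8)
The plan is to reduce the lemma to one statement about indicator functions. For $A\subseteq\R^n$ write $\mathbbm{1}_A$ for its indicator function, and let $W$ be the real vector space of functions $\R^n\to\R$ spanned by $\{\mathbbm{1}_P:P\in\Pn\}$. The one substantial point is the following \emph{consistency claim}: if $P_1,\dots,P_k\in\Pn$ and $\alpha_1,\dots,\alpha_k\in\R$ satisfy $\sum_{i=1}^k\alpha_i\mathbbm{1}_{P_i}=0$ (as a function on $\R^n$), then $\sum_{i=1}^k\alpha_i\varphi(P_i)=0$. Granting it, $\mathbbm{1}_P\mapsto\varphi(P)$ defines a linear functional $\overline\varphi:W\to\R$. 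For a polyhedron $Q=P_1\cup\dots\cup P_m$, the inclusion--exclusion identity for indicators gives $\mathbbm{1}_Q=\sum_{r=1}^m(-1)^{r-1}\sum_{1\le i_1<\dots<i_r\le m}\mathbbm{1}_{P_{i_1}\cap\dots\cap P_{i_r}}\in W$, so $\widehat\varphi(Q):=\overline\varphi(\mathbbm{1}_Q)$ is well defined and equals the right-hand side of (\ref{2.2}) for \emph{every} such representation; taking $Q\in\Pn$ shows $\varphi$ is fully additive on $\Pn$. Since $\mathbbm{1}_{Q_1\cup Q_2}+\mathbbm{1}_{Q_1\cap Q_2}=\mathbbm{1}_{Q_1}+\mathbbm{1}_{Q_2}$ holds pointwise, applying $\overline\varphi$ shows that $\widehat\varphi$ is a valuation on $\U(\Pn)$ extending $\varphi$, and a valuation on the lattice $\U(\Pn)$ is fully additive.

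It remains to prove the consistency claim, which is where weak additivity enters and where the real work lies. I would argue by induction on the ambient dimension, the base case being immediate. For the step, fix a finite polyhedral complex $\Sigma$ whose cells subdivide each of $P_1,\dots,P_k$ --- for instance the complex cut out by the finite family $\mathcal H$ of hyperplanes supporting the facets of the $P_i$, enlarged by finitely many further hyperplanes if some $P_i$ is not full-dimensional. Since the cells of $\Sigma$ have pairwise disjoint relative interiors, the functions $\{\mathbbm{1}_C:C\in\Sigma\}$ are linearly independent, so each $\mathbbm{1}_{P_i}$ has a unique expansion $\mathbbm{1}_{P_i}=\sum_{C\in\Sigma}c_C^{(i)}\mathbbm{1}_C$ with integer coefficients. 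The crucial assertion is that the \emph{same} coefficients compute $\varphi$:
$$\varphi(P_i)=\sum_{C\in\Sigma}c_C^{(i)}\,\varphi(C).$$
This is proved by slicing each $P_i$ repeatedly by the hyperplanes of $\mathcal H$: the defining relation of weak additivity, $\varphi(P)=\varphi(P\cap H^+)+\varphi(P\cap H^-)-\varphi(P\cap H)$, is the literal image under $\varphi$ of the pointwise identity $\mathbbm{1}_P=\mathbbm{1}_{P\cap H^+}+\mathbbm{1}_{P\cap H^-}-\mathbbm{1}_{P\cap H}$, so each cut keeps the $\varphi$-side in step with the indicator side; and the inductive hypothesis --- the consistency claim in one lower dimension, applied to the restriction of $\varphi$ to polytopes contained in a fixed $H\in\mathcal H$, which is again weakly additive --- is used to bring the lower-dimensional sections falling into those hyperplanes into the same form. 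By uniqueness of the expansion, the outcome is the displayed identity.

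Granting this, the consistency claim follows at once: setting $\beta_C:=\sum_{i=1}^k\alpha_i c_C^{(i)}$ one has both $\sum_i\alpha_i\mathbbm{1}_{P_i}=\sum_{C\in\Sigma}\beta_C\mathbbm{1}_C$ and $\sum_i\alpha_i\varphi(P_i)=\sum_{C\in\Sigma}\beta_C\varphi(C)$; the hypothesis makes the first sum vanish, so by linear independence of the $\mathbbm{1}_C$ each $\beta_C$ is $0$, whence the second sum vanishes as well. I expect the main obstacle to be precisely the middle step: turning the informal ``lockstep'' description into a rigorous argument requires careful bookkeeping of the lower-dimensional terms $-\varphi(P_i\cap H)$ spawned by successive cuts --- each of which is itself further subdivided --- and controlling the attendant signs and cancellations is what makes the induction on dimension unavoidable. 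In substance this is Volland's extension theorem (cf.\ \cite[Thm.~6.2.3]{Sch14}).
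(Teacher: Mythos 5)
The paper itself offers no proof of this lemma; it simply quotes it from \cite[Thm.\ 6.2.3]{Sch14} (Volland's extension theorem), so the comparison has to be with that classical argument, and your plan is in substance exactly that argument: the reduction to the consistency claim for indicator functions is the Groemer-style formulation, and the proposed verification by slicing along the facet hyperplanes, with an induction on dimension, is the Volland/Schneider proof. The reduction itself is correct and complete: granted the consistency claim, the linear functional $\overline\varphi$ exists, inclusion--exclusion for indicators gives full additivity on $\Pn$ and the valuation property of $\widehat\varphi$ on $\U(\Pn)$, and the linear independence of the indicators of the cells of $\Sigma$ does hold (though ``pairwise disjoint relative interiors'' needs the usual argument: take a cell $C_0$ of maximal dimension among those with nonzero coefficient and evaluate at a point of $\relint C_0$ avoiding the finitely many sets $C'\cap\relint C_0$, which all have dimension less than $\dim C_0$).

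The genuine gap is the ``crucial assertion'' $\varphi(P_i)=\sum_{C\in\Sigma}c_C^{(i)}\varphi(C)$, which is where the entire content of the lemma is concentrated and which your text describes rather than proves. Saying that each cut ``keeps the $\varphi$-side in step with the indicator side'' and that the rest is ``careful bookkeeping'' is a strategy, not an argument: you do not set up the double induction (on the ambient dimension and on the number of cutting hyperplanes remaining), you do not formulate the inductive hypothesis that is applied to the lower-dimensional sections $P_i\cap H$ --- which are themselves subdivided by the traces of the other hyperplanes of $\mathcal{H}$ in $H$, and whose signed contributions must be matched against the lower-dimensional cells of $\Sigma$ --- and you do not verify that after all cancellations the signed pieces produced by the cutting process carry precisely the coefficients $c_C^{(i)}$ determined by the indicator expansion. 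You candidly flag this as the main obstacle, and indeed it is: everything else in your proposal is routine, so as written the proof defers exactly the step that makes the lemma nontrivial. The route is the right one (it is the proof the paper points to), but to count as a proof the lockstep induction has to be carried out, not just announced.
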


Similarly, a function $\varphi:\PC^n\to\R$ is {\em weakly additive} if 
$$ \varphi(P) =\varphi(P\cap H^+)+\varphi(P\cap H^-) -\varphi(P\cap H)$$
holds for each $P\in\PC^n$ and each hyperplane $H$ through $o$. For later application, we formulate a slightly more general extension theorem. A {\em relatively open polyhedral cone} is the relative interior of a cone from $\PC^n$. By $\PC^n_{ro}$ we denote the set of all relatively open polyhedral cones in $\R^n$.

\begin{lemma}\label{L2.3}
Every weakly additive real function on $\PC^n$ is fully additive and has an additive extension to $\U(\PC^n_{ro})$.
\end{lemma}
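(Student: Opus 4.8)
The plan is to prove the two assertions of the lemma in succession. For the first --- that weak additivity on $\PC^n$ forces full additivity together with an additive extension to $\U(\PC^n)$ --- I would transcribe the proof of Lemma~\ref{L2.2} (i.e.\ of \cite[Thm. 6.2.3]{Sch14} and of the results cited in its Note~1), simply reading ``hyperplane through $o$'' for ``hyperplane'' throughout: the family $\PC^n$ is intersectional, and a hyperplane through $o$ cuts a polyhedral cone into two polyhedral cones whose intersection is again a polyhedral cone, which is the only structure the Groemer-type extension argument exploits. After this step we may take $\varphi$ to be a valuation on the lattice $\U(\PC^n)$, and then, as noted above, $\varphi$ is automatically fully additive on $\PC^n$.

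The substantive part is the extension from $\U(\PC^n)$ to the larger lattice $\U(\PC^n_{ro})$. (This is again a lattice: the intersection of two relatively open polyhedral cones is empty or again one, since two convex sets with a common relative interior point satisfy $\relint C_1\cap\relint C_2=\relint(C_1\cap C_2)$.) I would pass to indicator functions. Since $\varphi$ is a valuation on the lattice $\U(\PC^n)$, Groemer's theorem (see \cite[Sect. 6.2]{Sch14}) provides a linear functional $\ell$ on the real vector space $V$ spanned, inside the space of all real functions on $\Rn$, by the functions $\mathbf{1}_C$ with $C\in\PC^n$, such that $\ell(\mathbf{1}_C)=\varphi(C)$; equivalently, $\sum_i a_i\varphi(C_i)=0$ whenever $\sum_i a_i\mathbf{1}_{C_i}=0$ with $C_i\in\PC^n$, $a_i\in\R$.

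Next I would show $\mathbf{1}_A\in V$ for every $A\in\U(\PC^n_{ro})$. Writing $A=\relint C_1\cup\dots\cup\relint C_m$ with $C_i\in\PC^n$, inclusion--exclusion expresses $\mathbf{1}_A$ as an integer combination of the functions $\mathbf{1}_{\relint C_{i_1}\cap\dots\cap\relint C_{i_r}}$; by the relative-interior identity quoted above, each such intersection is empty or equals $\relint D$ with $D=C_{i_1}\cap\dots\cap C_{i_r}\in\PC^n$. The combinatorial Euler relation for a polyhedral cone, $\mathbf{1}_{\relint D}=\sum_F(-1)^{\dim D-\dim F}\mathbf{1}_F$ with $F$ ranging over the faces of $D$ (proved exactly as for polytopes, e.g.\ by passing to $D\cap\Sn$), then exhibits $\mathbf{1}_{\relint D}$ as an integer combination of indicators of faces of $D$, all of which lie in $\PC^n$. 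Hence $\mathbf{1}_A\in V$, and I would define $\overline{\varphi}(A):=\ell(\mathbf{1}_A)$. This is well defined by the choice of $\ell$; it restricts to $\varphi$ on $\U(\PC^n)$ (both sides being given by the same inclusion--exclusion expression over $\PC^n$); and it is a valuation on $\U(\PC^n_{ro})$, because for $A,B$ in this lattice one also has $A\cup B,A\cap B\in\U(\PC^n_{ro})$, and $\mathbf{1}_{A\cup B}+\mathbf{1}_{A\cap B}=\mathbf{1}_A+\mathbf{1}_B$ together with the linearity of $\ell$ yields $\overline{\varphi}(A\cup B)+\overline{\varphi}(A\cap B)=\overline{\varphi}(A)+\overline{\varphi}(B)$; also $\overline{\varphi}(\emptyset)=\ell(0)=0$.

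I expect the only real obstacle to be the first step --- making sure that the Groemer-type extension machinery for $\Pn$ really does carry over to $\PC^n$ with hyperplanes constrained to pass through $o$ --- together with the bookkeeping of the combinatorial Euler relation for polyhedral cones; once one has moved to indicator functions, everything else is formal.
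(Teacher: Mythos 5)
Your proposal is correct and follows essentially the same route as the paper, whose proof simply transfers \cite[Thm. 6.2.3, Cor. 6.2.4]{Sch14} from $\cP^n$ to $\PC^n$ by replacing hyperplanes with hyperplanes through $o$. Your explicit second step --- Groemer-type passage to a linear functional on the span of indicator functions combined with the Euler-type relation $\mathbf{1}_{\relint D}=\sum_{F}(-1)^{\dim D-\dim F}\mathbf{1}_F$ for faces $F$ of a polyhedral cone $D$ (which is equivalent to the local Euler relation the paper itself uses later) --- is exactly the content of the cited corollary carried over to cones, so nothing genuinely different or missing is involved.
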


\begin{proof}
The proof given in \cite[Thm. 6.2.3, Cor. 6.2.4]{Sch14} for the case of polytopes (that is, for $\cP^n$ instead of $\PC^n$) can verbally be carried over to the present case, if hyperplanes are replaced by hyperplanes through $o$.
\end{proof}

For a closed convex cone $C\subseteq\R^n$, the dual cone is defined by
$$ C^\circ =\{x\in\R^n:\langle x,y\rangle\le 0\mbox{ for all }y\in C\}.$$
It is again a closed convex cone, and if $C$ is polyhedral, then $C^\circ$ is polyhedral. If $C_1,C_2$ are closed convex cones such that $C_1\cup C_2$ is convex, then $C_1^\circ \cup C_2^\circ$ is convex, and
\begin{equation}\label{2.1a}
(C_1\cap C_2)^\circ = C_1^\circ\cup C_2^\circ, \qquad (C_1\cup C_2)^\circ = C_1^\circ\cap C_2^\circ.
\end{equation}
It follows that, if $\varphi$ is a valuation on $\PC^n$, then the function $\varphi^\circ$ defined by
$$ \varphi^\circ(C)= \varphi(C^\circ), \quad C\in \PC^n,$$
is a valuation on $\PC^n$. We call it the {\em dual valuation} of $\varphi$.

A valuation $\varphi$ on $\PC^n$ is called {\em simple} if $\varphi(P)=0$ for each cone $P\in\PC^n$ of dimension less than $n$. It is called {\em normalized} if $\varphi(\R^n)=1$.

\begin{theorem}\label{T2.1}
Let $\varphi$ be a simple, normalized valuation on $\PC^n$. Let $\varphi^\circ$ be the additive extension to $\U(\PC^n)$ of its dual valuation. Then
$$ \kappa(P,x):= \left\{ \begin{array}{ll} \varphi^\circ({\rm Tan}(P,x)) & \mbox{if }x\in P,\\[2mm] 0 & \mbox{if }x\in\R^n\setminus P,\end{array}\right.$$
for polyhedra $P\subset\R^n$ and for $x\in\R^n$, defines a polyhedral vertex curvature.
\end{theorem}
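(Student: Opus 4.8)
The plan is to verify the two conditions of Definition~1 for the function $\kappa$, the non-trivial part being the Gauss--Bonnet identity, which I would reduce from arbitrary polyhedra to single convex polytopes by exploiting the valuation property. For property~(1), I would first observe that both ${\rm Tan}(P,x)$ and the condition ``$x\in P$'' are determined by $P$ in an arbitrarily small neighbourhood of $x$: fix $v\in\R^n$; for every neighbourhood $N$ of $x$ the segment $[x,x+\lambda v]$ lies in $N$ once $\lambda>0$ is small enough, and since a polyhedron meets a line in a finite union of points and segments, $[x,x+\lambda v]\subseteq P$ for some $\lambda>0$ if and only if $[x,x+\lambda v]\subseteq P\cap N$ for some $\lambda>0$. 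Hence $P\cap N=P'\cap N$ implies ${\rm Tan}(P,x)={\rm Tan}(P',x)$ together with ($x\in P\Leftrightarrow x\in P'$), so $\kappa(P,x)=\kappa(P',x)$.

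For the reduction, I would note that the assignment $P\mapsto{\rm Tan}(P,x)$ maps $\U(\Pn)$ into $\U(\PC^n)$ and preserves the lattice operations: ${\rm Tan}(A\cap B,x)={\rm Tan}(A,x)\cap{\rm Tan}(B,x)$ is immediate, and ${\rm Tan}(A\cup B,x)={\rm Tan}(A,x)\cup{\rm Tan}(B,x)$ follows once more from the fact that, near the parameter value $0$, the set of $t$ with $x+tv$ in a fixed polyhedron is empty, equals $\{0\}$, or contains some interval $[0,\varepsilon]$; also ${\rm Tan}(\emptyset,x)=\emptyset$. Because $\kappa(P,x)=\varphi^\circ({\rm Tan}(P,x))$ for \emph{every} polyhedron $P$ (for $x\notin P$ both sides are $0$) and $\varphi^\circ$ is fully additive on $\U(\PC^n)$, the function $P\mapsto\kappa(P,x)$ is fully additive on $\U(\Pn)$; the Euler characteristic $\chi$ is fully additive there as well. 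Writing $P=P_1\cup\dots\cup P_m$ with convex polytopes $P_i$, expanding $\kappa(P,x)$ by the inclusion--exclusion identity (\ref{2.2}), summing over $x$ (a finite sum), and then applying (\ref{2.2}) to $\chi$ shows that the finiteness statement in property~(2) and the identity $\sum_{x\in P}\kappa(P,x)=\chi(P)$ follow once they are known for a single convex polytope; the empty intersections contribute $0$ on both sides.

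For the convex case, let $P$ be a convex polytope and $x\in P$. Then ${\rm Tan}(P,x)=\pos(P-x)$, so its dual cone is the normal cone $N(P,x)$ of $P$ at $x$ and $\kappa(P,x)=\varphi(N(P,x))$. If $L$ is the linear subspace parallel to $\aff P$, then $N(P,x)=N_0(P,x)\oplus L^\perp$, where $N_0(P,x)\subseteq L$ is the normal cone of $P$ at $x$ formed inside $\aff P$; consequently $\dim N(P,x)=n$ exactly when $x$ is a vertex of $P$, and, $\varphi$ being simple, $\kappa(P,x)=0$ for all other $x\in P$, so $\kappa(P,\cdot)$ is supported on the finitely many vertices of $P$. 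For the identity I would use that the (full-dimensional) cones $N(P,x)$, $x\in\vert P$, cover $\R^n$ -- because their $L$-parts $N_0(P,x)$ cover $L$ -- while $N(P,x)\cap N(P,x')$ has dimension less than $n$ for distinct vertices $x,x'$, its $L$-part being a lower-dimensional cone in $L$. By full additivity of $\varphi$ on $\U(\PC^n)$ and inclusion--exclusion, every term containing an intersection of two or more distinct cones $N(P,x)$ vanishes by simplicity, whence
$$\sum_{x\in P}\kappa(P,x)=\sum_{x\in\vert P}\varphi(N(P,x))=\varphi(\R^n)=1=\chi(P)$$
by normalization; if $P=\emptyset$ both sides are $0$.

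The routine points are the lattice-homomorphism property of ${\rm Tan}(\cdot,x)$ and the inclusion--exclusion bookkeeping. The step that requires real care is the convex case -- in particular the geometric fact that, for a convex polytope of arbitrary dimension, the normal cones at the vertices are $n$-dimensional, tile $\R^n$, and meet pairwise in sets of dimension less than $n$, so that simplicity of $\varphi$ annihilates every correction term in the expansion of $\varphi(\R^n)$. Everything else is then formal, using only the valuation property together with Lemmas~\ref{L2.2} and~\ref{L2.3}, which supply the additive extensions invoked above.
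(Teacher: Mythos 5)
Your proof is correct, and its overall architecture agrees with the paper's: check locality, establish additivity of $\kappa(\cdot,x)$, reduce the Gauss--Bonnet identity to a single convex polytope by inclusion--exclusion, and in the convex case use ${\rm Tan}(P,x)^\circ=N(P,x)$, simplicity (so that $\kappa(P,\cdot)$ is supported on the vertices) and normalization together with the covering of $\R^n$ by the vertex normal cones. The one genuine divergence is the additivity step. The paper shows that $\kappa(\cdot,x)$ restricted to $\Pn$ is weakly additive, using ${\rm Tan}(P\cap H^+,x)={\rm Tan}(P,x)\cap L^+$, and then invokes Lemma \ref{L2.2} to obtain an additive extension to $\U(\Pn)$; you instead prove that ${\rm Tan}(\cdot,x)$ preserves finite unions and intersections of polyhedra (the union statement resting on the structure of a polyhedron intersected with a ray) and compose with the fully additive $\varphi^\circ$ on $\U(\PC^n)$. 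Your route buys a small but real advantage: it shows directly that the defining formula $\varphi^\circ({\rm Tan}(P,x))$ is itself additive on all of $\U(\Pn)$, i.e.\ coincides with the additive extension of its restriction to $\Pn$ --- an identification the paper's proof uses implicitly and carries out explicitly only in Section 3 (for the outer angle), by essentially the same union formula for the tangent cone that you establish. Your explicit verification of property (1) and of the lower-dimensional case via $N(P,x)=N_0(P,x)\oplus L^\perp$ are details the paper omits as standard; also, the paper finishes by comparing the valuations $\psi(P)=\sum_{x}\kappa(P,x)$ and $\chi$ on $\U(\Pn)$, while you expand by inclusion--exclusion directly, which is equivalent. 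One wording nit: where you invoke ``full additivity of $\varphi$ on $\U(\PC^n)$'' in the convex case, what is needed (and available, since a valuation on $\PC^n$ is weakly additive, so Lemma \ref{L2.3} applies) is full additivity of $\varphi$ on $\PC^n$, the union $\bigcup_{x}N(P,x)=\R^n$ being itself a cone.
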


\begin{proof}
First we note that for each fixed $x$, the mapping $P\mapsto \kappa(P,x)$ is a valuation on $\Pn$. For the proof, let $H$ be a hyperplane through $x$, let $L$ be the translate of $H$ through $o$. Then
$$ {\rm Tan}(P\cap H^+,x) = {\rm Tan}(P,x) \cap L^+,$$
and similarly for $H^-$ and $H$. Therefore,
\begin{eqnarray*}
&& \kappa(P\cap H^+,x) +\kappa(P\cap H^-,x) -\kappa(P\cap H,x)\\
&& = \varphi^\circ({\rm Tan}(P\cap H^+,x)) +  \varphi^\circ({\rm Tan}(P\cap H^-,x)) - \varphi^\circ({\rm Tan}(P\cap H,x))\\
&& = \varphi^\circ({\rm Tan}(P,x)\cap L^+) + \varphi^\circ({\rm Tan}(P,x)\cap L^-) - \varphi^\circ({\rm Tan}(P,x)\cap L) \\
&&= \varphi^\circ({\rm Tan}(P,x)) = \kappa(P,x),
\end{eqnarray*}
where it was used that $\varphi^\circ$ is weakly additive. If the hyperplane $H$ does not pass through $x$, then the equality $\kappa(P\cap H^+,x) +\kappa(P\cap H^-,x) -\kappa(P\cap H,x)  = \kappa(P,x)$ holds trivially. It follows that $\kappa(\cdot,x)$ is weakly additive and hence, by Lemma \ref{L2.2}, has an additive extension to $\U(\Pn)$. In particular, it is a valuation on $\cP^n$.

Now let $P$ be a polyhedron. We state that $\kappa(P,x)\not= 0$ for at most finitely many points of $P$. For the proof, let $P$ first be a convex polytope, and let $x\in P$. Then 
$$ {\rm Tan}(P,x)^\circ = N(P,x)$$
is the normal cone of $P$ at $x$. We have $\kappa(P,x)= \varphi^\circ({\rm Tan}(P,x))= \varphi({\rm Tan}(P,x)^\circ)= \varphi(N(P,x))$, and this is different from zero only if $N(P,x)$ has dimension $n$, since the valuation $\varphi$ is simple,  hence only if $x$ is a vertex of $P$. 

Let $P=\bigcup_{i=1}^m P_i$, where $P_1,\dots,P_m$ are convex polytopes. Let $x\in P$. Since $\kappa(\cdot,x)$ is a valuation on $\U(\cP^n)$, we have 
$$\kappa(P,x)= \sum_{r=1}^m(-1)^{r-1}\sum_{1\le i_1<\dots< i_r\le m} \kappa(P_{i_1}\cap\dots\cap P_{i_r},x).$$
If this is different from $0$, then $x$ must be a vertex of some nonempty polytope $P_{i_1}\cap\dots\cap P_{i_r}$. There are only finitely many such points. 

Now we can define
$$ \psi(P):= \sum_{x\in P} \kappa(P,x)$$
for every polyhedron $P$, since the sum is finite. If $P$ is a convex polytope, we have seen that $\kappa(P,x) = \varphi(N(P,x))$ and that this is equal to $0$ if $x\notin{\rm vert}\,P$ (the set of vertices of $P$), hence
$$ \psi(P) =\sum_{x\in{\rm vert}\,P}\varphi(N(P,x)).$$
We have $\bigcup_{x\in{\rm vert}\,P} N(P,x)=\Rn$ and $\dim[N(P,x_1)\cap N(P,x_2)]<n$ for different vertices $x_1,x_2$ of $P$. Since $\varphi$ is a simple and normalized valuation, this gives $\psi(P)= \varphi(\R^n)=1=\chi(P)$.

For arbitrary polyhedra $P_i$, we have $\psi(P_i)=\sum_{x\in\R^n} \kappa(P_i,x)$, where the sum is finite, and hence
\begin{eqnarray*}
&& \psi(P_1\cup P_2)+\psi(P_1\cap P_2) -\psi(P_1)-\psi(P_2)\\
&& = \sum_{x\in\R^n} [\kappa(P_1\cup P_2,x) + \kappa(P_1\cap P_2,x) - \kappa(P_1,x) - \kappa(P_2,x)] =0,
\end{eqnarray*}
since $\kappa(\cdot,x)$ is a valuation. Thus, $\psi$ is additive. Since $\psi$ and $\chi$ are both valuations and they coincide on convex polytopes, they coincide on polyhedra. This shows that 
$$\sum_{x\in P} \kappa(P,x)=\chi(P)$$ 
for each polyhedron $P\in\U(\cP^n)$.
\end{proof}

If $\kappa$ is a polyhedral vertex curvature and if $\kappa(P,x)\not= 0$, we might want to say that $x$ is a vertex of $P$. For which notion of vertex does this hold?

\vspace{2mm}

\noindent{\bf Definition 2.} A point $x\in P$ is a {\em geometric vertex} of the polyhedron $P$ if the tangent cone ${\rm Tan}(P,x)$ is not a union of parallel lines.

\vspace{2mm}

Suppose $P$ is a polyhedron and $x\in P$ is not a geometric vertex of $P$. Then ${\rm Tan}(P,x)$ is a union of parallel lines, say parallel to the line $L$ through $o$. The intersection ${\rm Tan}(P,x)\cap L^\perp$ is the union of finitely many convex polyhedral cones in $L^\perp$, say 
$$ {\rm Tan}(P,x)\cap L^\perp = C_1\cup\dots\cup C_m.$$
Then
$$ {\rm Tan}(P,x) = \overline C_1  \cup\dots\cup \overline C_m$$
with $\overline C_i:= C_i+L$. Let $\varphi$ be a simple valuation on $\PC^n$, and let $\varphi^\circ$ be the additive extension to $\U(\PC^n)$ of its dual valuation. Then
$$ \varphi^\circ({\rm Tan}(P,x)) =\varphi^\circ(\overline C_1 \cup\dots\cup \overline C_m) =\sum_{r=1}^m(-1)^{r-1} \sum_{1\le i_1<\dots<i_r\le m} \varphi^\circ(\overline C_{i_1} \cap \dots \cap \overline C_{i_r}).$$
Here each cone $\overline C_{i_1} \cap \dots \cap \overline C_{i_r}$ contains the line $L$, hence 
$$ \varphi^\circ(\overline C_{i_1} \cap \dots \cap \overline C_{i_r})= \varphi((\overline C_{i_1} \cap \dots \cap \overline C_{i_r})^\circ)=0,$$
since $\varphi$ is simple. This implies that $\kappa(P,x)=0$ for any polyhedral vertex curvature $\kappa$.

The polyhedral vertex curvatures considered here have the property that they are translation invariant, in the sense that $\kappa(P+t,x+t)=\kappa(P,x)$ for all $t\in\R^n$, for $P\in\U(\Pn)$ and $x\in\R^n$. 

\vspace{2mm}

\noindent{\bf Question 1.} Does the construction of Theorem \ref{T2.1} yield all translation invariant polyhedral vertex curvatures?

\section{Examples}\label{sec3}

To obtain examples of polyhedral vertex curvatures, we may take a (Borel) probability measure $\mu$ on the unit sphere $\Sn$, which assigns measure zero to each great subsphere, for example the normalized spherical Lebesgue measure $\sigma$, and define
\begin{equation}\label{3.1} 
\varphi(C) = \mu(C\cap\Sn) \quad\mbox{for } C\in \PC^n.
\end{equation}
Then $\varphi$ is a simple, normalized valuation on $\PC^n$. The corresponding polyhedral vertex curvature can be written in closed form, as suggested (for $\mu=\sigma$) by Hadwiger \cite{Had69}. For this, one defines, for $x\in\R^n$, $u\in \Sn$ and $\varepsilon>0$, the halfsphere
$$ H_\varepsilon(x,u) = \{x+\varepsilon v: v\in\R^n,\, \langle v,u\rangle\ge 0\}.$$
Then
$$ \kappa(P,x) = \chi(\{x\}\cap P) -\lim_{\varepsilon\downarrow 0} \int_{\Sn} \chi[H_\varepsilon(x,u)\cap P]\,\mu(\D u)$$
for $P\in \U(\Pn)$, where in the integrand $\chi$ denotes the Euler characteristic of spherical polytopes. For the proof, one notes that $\kappa(\cdot,x)$ thus defined is weakly additive on $\Pn$ and hence additive on $\U(\Pn)$, that $\kappa(P,x)=0$ if $x\notin P$, and that
$$ \kappa(P,x) = \mu(N(P,x))$$
if $P$ is a polytope and $x\in P$.

Let us denote  by $K(P,x)$ the polyhedral vertex curvature that is derived, according to Theorem 1, from (\ref{3.1}) with $\mu=\sigma$. Thus, for a convex polytope $P$,
\begin{equation}\label{3.1a} 
K(P,x)=\sigma(N(P,x)\cap\Sn)=\Gamma(P,x),
\end{equation}
where $\Gamma(P,x)$ denotes the outer angle of $P$ at $x$. This is the vertex curvature introduced, for polyhedra, by Hadwiger \cite{Had69}. Clearly, it is invariant under rigid motions, in the sense that $K(gP,gx)=K(P,x)$ for each rigid motion $g$ of $\Rn$, for all $P\in\cP^n$ (and hence all $P\in\U(\cP^n)$) and all $x\in\Rn$.

\vspace{2mm}

\noindent{\bf Question 2.} Are there other polyhedral vertex curvatures that are invariant under rigid motions?

\vspace{2mm}

We want to extend the interpretation (\ref{3.1a}) from polytopes to polyhedra $P$. First let $C$ be a closed convex cone in $\R^n$. Its {\em outer angle} is defined by
$$ \Gamma(C)= \sigma(C^\circ\cap\Sn).$$
Thus, $K(P,x)=\Gamma({\rm Tan}(P,x))$ for a polytope $P$. It follows from (\ref{2.1a}) that $\Gamma$ is a valuation on $\PC^n$. By Lemma \ref{L2.3}, $\Gamma$ has an additive extension to $\U(\PC^n_{ro})$. We denote the extension by the same symbol. If now $P=\bigcup_{i=1}^m P_i$ with polytopes $P_i\in\cP^n$, and $x\in\R^n$, then ${\rm Tan}(P,x)=\bigcup_{i=1}^m {\rm Tan}(P_i,x)$. By the additivity of $K(\cdot,x)$ and $\Gamma$, it follows that
$$ K(P,x)=\Gamma({\rm Tan}(P,x))\quad\mbox{for polyhedra } P\in\U(\cP^n).$$
This extends (\ref{3.1a}). It implies that $K(P,x)=\Gamma(P,x)$.

This quantity can also be derived from an arbitrary cell decomposition of $P$, as follows.

\begin{lemma}\label{L3.1a}
If $\cZ$ is a polyhedral cell complex in $\R^n$, then
$$ \Gamma(|\cZ|),x) =\sum_{Z\in\cZ} (-1)^{\dim Z}\Gamma(Z,x)$$
for $x\in\R^n$.
\end{lemma}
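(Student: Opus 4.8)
The plan is to pass to tangent cones and to reduce the statement to one classical relation for the outer angle of a polyhedral cone. Recall from Section~\ref{sec3} that $\Gamma(P,x)=\Gamma({\rm Tan}(P,x))$ for every polyhedron $P$ and every $x\in\R^n$, where on the right $\Gamma$ denotes the outer-angle valuation on $\PC^n$, extended additively to $\U(\PC^n_{ro})$ by Lemma~\ref{L2.3}; in particular $\Gamma(Z,x)=\Gamma({\rm Tan}(Z,x))$ for a polytope cell $Z\in\cZ$, one has $\dim{\rm Tan}(Z,x)=\dim Z$ whenever $x\in Z$, and ${\rm Tan}(Z,x)=\emptyset$ for $x\notin Z$. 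Thus the assertion is equivalent to
$$\Gamma\bigl({\rm Tan}(|\cZ|,x)\bigr)=\sum_{Z\in\cZ}(-1)^{\dim Z}\,\Gamma\bigl({\rm Tan}(Z,x)\bigr),$$
where the cells with $x\notin Z$ contribute $\Gamma(\emptyset)=0$ and may be disregarded.

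The first step is a disjoint decomposition of ${\rm Tan}(|\cZ|,x)$. For $Z\in\cZ$, the set of directions $v$ with $x+\lambda v\in\relint Z$ for all sufficiently small $\lambda>0$ is exactly $\relint{\rm Tan}(Z,x)$ --- the elementary fact that, for a convex polytope, forming the tangent cone commutes with passing to relative interiors. Since the relative interiors of distinct cells of $\cZ$ are disjoint, I would show that
$${\rm Tan}(|\cZ|,x)=\bigcup_{Z\in\cZ}\relint{\rm Tan}(Z,x)$$
is a disjoint union: a direction $v$ lies in the left-hand side iff $[x,x+\lambda v]\subseteq|\cZ|$ for some $\lambda>0$, and then $x+\mu v$ lies in the relative interior of one and the same cell $Z\in\cZ$ for all small $\mu>0$. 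As each piece $\relint{\rm Tan}(Z,x)$ lies in $\PC^n_{ro}$, full additivity of the extension of $\Gamma$ (Lemma~\ref{L2.3}), applied to pairwise disjoint sets, gives
$$\Gamma\bigl({\rm Tan}(|\cZ|,x)\bigr)=\sum_{Z\in\cZ}\Gamma\bigl(\relint{\rm Tan}(Z,x)\bigr).$$

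The second, decisive step is the local identity $\Gamma(\relint C)=(-1)^{\dim C}\Gamma(C)$ for every $C\in\PC^n$; granting it for $C={\rm Tan}(Z,x)$ and using $\dim{\rm Tan}(Z,x)=\dim Z$, the last display becomes the formula to be proved. By M\"obius inversion over the face lattice of $C$ (whose M\"obius function on the interval from a face $F$ to a face $G$ is $(-1)^{\dim G-\dim F}$), this local identity is equivalent to the conic Gram--Euler relation $\sum_{F\le C}(-1)^{\dim F}\Gamma(F)=\Gamma(C)$, summed over all faces $F$ of $C$. That relation is the conic counterpart of Gram's relation for polytopes; it is \emph{not} a formal consequence of the valuation property --- for instance it fails for the cone-volume valuation $C\mapsto\sigma(C\cap\Sn)$ --- and I would either quote it from the literature on solid angles and conic intrinsic volumes, or derive it from the spherical Gauss--Bonnet theorem applied to the spherically convex polytope $C^\circ\cap\Sn$ together with the correspondence of faces under polarity. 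Establishing this conic Gram relation is the step I expect to be the main obstacle; the reduction to tangent cones (Section~\ref{sec3}), the disjoint decomposition, and the passage from a disjoint union to a sum via Lemma~\ref{L2.3} are routine.
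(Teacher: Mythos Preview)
Your proposal is correct and follows essentially the same route as the paper: the paper reduces Lemma~\ref{L3.1a} to the conic statement (Lemma~\ref{L3.1b}) via tangent cones, then uses the disjoint decomposition $|\cZ|=\bigcup_{C\in\cZ}\relint C$, additivity of $\Gamma$ on $\U(\PC^n_{ro})$, and the identity $\Gamma(\relint C)=(-1)^{\dim C}\Gamma(C)$, which it derives from the conic Gram relation $\sum_{F\in\F(C)}(-1)^{\dim F}\Gamma(F)=\Gamma(C)$ (quoted from Amelunxen--Lotz) together with the local Euler relation---exactly your M\"obius-inversion step made explicit. Your identification of the conic Gram relation as the one non-formal ingredient is spot on.
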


This is the formula that Brin \cite{Bri48} used as a definition, and our approach verifies that $ \Gamma(|\cZ|),x)$ depends only on the point set $|\cZ|$ and not on its decomposition. Lemma \ref{L3.1a} is an immediate consequence of Lemma \ref{L3.1b}. For this, we understand by a {\em conic cell complex} a finite set $\cZ$ of polyhedral cones in $\PC^d$ such that each face of a cone of $\cZ$ belongs to $\cZ$  and that the intersection of any two cones from $\cZ$ is a face of each of them (possibly $\{o\}$). The carrier $|\cZ|=\bigcup_{C\in\cZ} C$, or underlying space, of the conic cell complex $\cZ$ is an element of $\U(\PC^d_{ro})$. 

\begin{lemma}\label{L3.1b}
If $\Gamma$ denotes the additive extension of the outer angle to $\U(\PC^d)$ and if $\cZ$ is a conic cell complex in $\R^n$, then
\begin{equation}\label{1.8.4b}
\Gamma(|\cZ|) =\sum_{C\in\cZ} (-1)^{\dim C}\Gamma(C).
\end{equation}
\end{lemma}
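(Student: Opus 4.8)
The plan is to prove \eqref{1.8.4b} by reducing it to a known identity about the Euler characteristic of the underlying space via a duality argument, exploiting that $\Gamma$ is the dual valuation of the measure $\varphi(C) = \sigma(C \cap \Sn)$. First I would recall that, as noted in the discussion following Lemma \ref{L2.3}, for a valuation $\psi$ on $\PC^n$ one has $\psi^\circ(C) = \psi(C^\circ)$, and $\Gamma = \varphi^\circ$ where $\varphi(C) = \sigma(C \cap \Sn)$ is the simple, normalized valuation from \eqref{3.1} with $\mu = \sigma$. Since dualizing interchanges unions and intersections on cones whose union is convex (formula \eqref{2.1a}), the additive extension of $\Gamma$ to $\U(\PC^d_{ro})$ is obtained by dualizing the inclusion-exclusion expansion, so that $\Gamma(|\cZ|) = \varphi^\circ(|\cZ|)$ can be computed from the cones $C^\circ$, $C \in \cZ$.

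Alternatively — and this is the route I would actually take, since it avoids having to track the relatively-open cones carefully through duality — I would prove \eqref{1.8.4b} by the same valuation-extension machinery used in the proof of Theorem \ref{T2.1}. Both sides of \eqref{1.8.4b} are, for fixed conic cell complex $\cZ$, ``additive in the right way,'' but more to the point: the right-hand side is a finite combination of values $\Gamma(C)$ on genuine (closed) polyhedral cones, while the left-hand side is $\Gamma$ of a relatively open polyhedral set. The key is to reduce to the case where $\cZ$ is a \emph{conic triangulation} (a simplicial conic cell complex), since any conic cell complex admits a subdivision into one without introducing new rays through $o$ other than those needed, and to check that both sides are invariant under such subdivision. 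That both sides are unchanged under subdivision follows from the additivity of $\Gamma$ on $\U(\PC^d_{ro})$ together with the fact that the alternating sum $\sum_{C \in \cZ}(-1)^{\dim C}\Gamma(C)$ is a valuation-type quantity insensitive to refinement — this is exactly the conic analogue of the statement, mentioned in the Introduction, that $G(\cZ,x) = \Gamma(|\cZ|,x)$ is decomposition-independent.

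Concretely, the steps in order: (i) observe that $|\cZ| \in \U(\PC^d_{ro})$ so that $\Gamma(|\cZ|)$ is defined via Lemma \ref{L2.3}; (ii) write $|\cZ| = \bigcup_{C \in \cZ}\relint C$ as a disjoint union over the cells of the complex, and note that for a relatively open cone $\relint C$ one has $\relint C = C \setminus \bigcup\{F : F \text{ proper face of } C\}$, so by inclusion-exclusion $\Gamma(\relint C)$ can be expressed through the $\Gamma(F)$ of faces $F \preceq C$; (iii) use additivity to write $\Gamma(|\cZ|) = \sum_{C \in \cZ}\Gamma(\relint C)$ — this step needs that the $\relint C$ are disjoint and their union is $|\cZ|$, which is the cell-complex axiom; (iv) substitute the face expansion from (ii) and collect terms: the coefficient with which a given closed cone $D \in \cZ$ appears becomes $\sum_{C \succeq D}(-1)^{\dim C - \dim D}\mu([D,C])$-type Möbius-function sum over the face lattice, which by the standard combinatorial fact (the Euler characteristic of the order complex of an interval in a polyhedral face lattice, equivalently $\sum_{D \preceq C \preceq E}(-1)^{\dim C} = 0$ unless $D = E$, combined here with the top being all of $\cZ$) collapses to $(-1)^{\dim D}$; (v) conclude \eqref{1.8.4b}.

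The main obstacle is step (iv): making the combinatorial collapse precise. One must be careful that $\cZ$ is not required to contain a unique maximal cell, so the sum is over a general conic cell complex, and the relevant identity is $\sum_{C \in \cZ,\, C \succeq D}(-1)^{\dim C} = (-1)^{\dim D}\cdot[\text{something}]$ where the bracket is the reduced Euler characteristic of the link of $D$ in $\cZ$ — which need not vanish unless one is careful about whether $D$ is an interior or boundary cell. The cleanest fix, which I would adopt, is to first handle the case where $\cZ$ is the complex of \emph{all} faces of a single polyhedral cone $C_0$ (where the link identities are clean and \eqref{1.8.4b} reads $\Gamma(\relint C_0) = \sum_{F \preceq C_0}(-1)^{\dim F}\Gamma(F)$, a conic Gram-type relation that one checks directly from inclusion-exclusion applied to $\relint C_0 = C_0 \setminus \bigcup_{F \precneq C_0} F$), and then assemble the general $\cZ$ from its cells via (iii), so that the only combinatorial input needed is the single-cone face identity plus disjointness — the troublesome global Euler-characteristic-of-link terms never arise because the disjoint decomposition in (iii) already does the bookkeeping.
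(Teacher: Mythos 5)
Your skeleton (steps (i)--(iii): $|\cZ|\in\U(\PC^d_{ro})$, the disjoint decomposition $|\cZ|=\bigcup_{C\in\cZ}\relint C$, and additivity giving $\Gamma(|\cZ|)=\sum_{C\in\cZ}\Gamma(\relint C)$) is exactly the paper's, and your instinct to reduce everything to a single-cone identity rather than fight with links is also the right one. But the single-cone step, which is where all the content of the lemma sits, has a genuine gap: the identity you propose to ``check directly from inclusion-exclusion'' is neither correct as stated nor obtainable by inclusion-exclusion. First, if $\cZ$ is the complex of all faces of one cone $C_0$, then $|\cZ|=C_0$, so \eqref{1.8.4b} specializes to $\Gamma(C_0)=\sum_{F\in\F(C_0)}(-1)^{\dim F}\Gamma(F)$ --- not to a statement about $\Gamma(\relint C_0)$. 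This is precisely the conic Gram/Sommerville angle relation \eqref{3.3}, an analytic identity special to the outer angle (the paper imports it from Amelunxen--Lotz); it cannot follow from additivity and face-lattice combinatorics alone, since those hold for every valuation on $\PC^n$ while \eqref{3.3} does not. What inclusion-exclusion (equivalently, M\"obius inversion of $\Gamma(C)=\sum_{F\in\F(C)}\Gamma(\relint F)$ using the local Euler relation \eqref{3.4}) actually gives is $\Gamma(\relint C)=(-1)^{\dim C}\sum_{F\in\F(C)}(-1)^{\dim F}\Gamma(F)$; only after invoking \eqref{3.3} does this become the cell-wise identity $\Gamma(\relint C)=(-1)^{\dim C}\Gamma(C)$, which is what makes your assembly in (iii) work term by term --- and that combination is exactly the paper's proof. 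Your stated identity $\Gamma(\relint C_0)=\sum_{F\preceq C_0}(-1)^{\dim F}\Gamma(F)$ is false by the sign $(-1)^{\dim C_0}$: for a ray $C_0\subset\R^n$ one has $\Gamma(C_0)=\tfrac12$, $\Gamma(\{o\})=1$, hence $\Gamma(\relint C_0)=-\tfrac12$, while the right-hand side equals $1-\tfrac12=\tfrac12$.

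There is also a structural problem with the proposed assembly: if you substitute a face expansion of $\Gamma(\relint C)$ in terms of the $\Gamma(F)$, $F\preceq C$, into $\sum_{C\in\cZ}\Gamma(\relint C)$ and collect, each face of $\cZ$ is counted once for every cell containing it, so the collapse to $\sum_{C\in\cZ}(-1)^{\dim C}\Gamma(C)$ would again require exactly the link Euler-characteristic bookkeeping you were trying to avoid; only the term-wise identity $\Gamma(\relint C)=(-1)^{\dim C}\Gamma(C)$ makes that bookkeeping unnecessary. Finally, the detour through conic triangulations and subdivision invariance in your second paragraph is both unneeded and unjustified as stated: subdivision invariance of the alternating sum $\sum_{C\in\cZ}(-1)^{\dim C}\Gamma(C)$ is essentially equivalent to the lemma itself, so it cannot simply be asserted. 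To repair the proof, replace the ``inclusion-exclusion check'' by a genuine proof or citation of \eqref{3.3}, combine it with \eqref{3.4} to get $\Gamma(\relint C)=(-1)^{\dim C}\Gamma(C)$, and then your steps (i)--(iii) finish the argument.
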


\begin{proof}
For a polyhedral cone $C\in\PC^n$, the relation
\begin{equation}\label{3.3}
\sum_{F\in\F(C)} (-1)^{\dim F} \Gamma(F) = \Gamma(C)
\end{equation}
holds, where $\F(C)$ denotes the set of faces of $C$. This can be deduced, via duality, from the Sommerville relations for the internal angles of a polyhedral cone. Here we refer, instead, to the proof given by Amelunxen and Lotz in \cite[Thm. 4.1]{AL16}. Besides this, we use the local Euler relation: if $P$ is a closed convex polyhedral set (not necessarily bounded) and if $\emptyset\not=G\not= P$ is a face of $P$, then
\begin{equation}\label{3.4}
\sum_{G\subseteq F\in\F(P)} (-1)^{\dim F}=0.
\end{equation}

Now let $\cZ$ be a conic cell complex in $\R^n$. From the complex property it follows that
$$ |\cZ| =\bigcup_{C\in\cZ} {\rm relint}\,C $$
is a disjoint union. Since $\Gamma$ is a valuation on $\U(\PC^n_{ro})$, this implies that
\begin{equation}\label{3.5}
\Gamma(|\cZ|) =\sum_{C\in\cZ} \Gamma({\rm relint}\,C).
\end{equation}
For an arbitrary polyhedral cone $C\in\PC^d$, we use (\ref{3.3}), decompose a face of $C$ into the relative interiors of its faces, and then use (\ref{3.4}), to obtain
\begin{eqnarray*}
\Gamma(C) &=& \sum_{F\in\F(C)} (-1)^{\dim F} \Gamma(F) =  \sum_{F\in\F(C)} (-1)^{\dim F} \sum_{G\in\F(F)} \Gamma({\rm relint}\,G) \\
&=& \sum_{G\in\F(C)} \Gamma({\rm relint}\,G) \sum_{G\subseteq F\in\F(C)} (-1)^{\dim F}\\
&=& (-1)^{\dim C} \Gamma({\rm relint}\,C).
\end{eqnarray*}
Inserting this in (\ref{3.5}), we obtain the assertion.
\end{proof}

If in the initial definition (\ref{3.1}) we take for $\mu$ the Dirac measure at $\xi\in\Sn$, then the resulting valuation is not simple. However, we can change the procedure a little. For fixed $\xi\in\Sn$, let
$$ \varphi(C) = \left\{ \begin{array}{ll} 1 & \mbox{if } \xi\in C,\\ 0 &\mbox{otherwise},\end{array}\right.\qquad C\in\PC^n.$$
Then $\varphi$ is a normalized valuation on $\PC^n$. Since it is not simple, we consider the family $\cP^n_\xi\subset\cP^n$ of polytopes $P$ with $\dim[H(P,\xi)\cap P]=0$, where $H(P,\xi)$ is the supporting hyperplane of $P$ with outer normal vector $\xi$. The corresponding polyhedral vertex curvature $\kappa(P,x)$, according to Theorem \ref{T2.1}, can then be defined for polyhedra $P\in\U(\cP^n_\xi)$. We denote it by
$$ \kappa(P,x) = i(P,x,\xi).$$
This is known as the {\em index of $P$ at $x$ with respect to the direction $\xi$}. Thus, for $P\in\cP_\xi^n$, $i(P,x,\xi)=1$ if $H(P,\xi)\cap P=\{x\}$, and $i(P,x,\xi)=0$ otherwise. For a polyhedron $P\in\U(\cP^n_\xi)$, the index can be represented by
$$ i(P,x,\xi)= \chi(\{x\}\cap P) -\lim_{\delta\downarrow 0} \lim_{\varepsilon\downarrow 0} \chi(K\cap B(x+(\delta+\varepsilon)\xi,\delta),$$
where $B(z,\rho)$ denotes the closed ball with center $z$ and radius $\rho$. For this representation, we refer to \cite[p. 234]{Sch14}. The arguments above yield
 
\begin{equation}\label{3.2}
\sum_{x\in P} i(P,x,\xi) =\chi(P).
\end{equation}
This is the `Critical point theorem' in Banchoff \cite{Ban67}. We wanted to point out here that, from the valuation viewpoint, the Critical point theorem (\ref{3.2}) for polyhedra is only a special case of a general polyhedral Gauss--Bonnet theorem. 

Many more simple valuations on $\PC^n$ can be constructed if one suitably imitates a procedure of Hadwiger (see, e.g., \cite[Thm. 6.4.4]{Sch14}), but a complete classification seems to be unknown.

\vspace{3mm}

\noindent Author's address:\\[2mm]
Rolf Schneider\\
Mathematisches Institut, Albert-Ludwigs-Universit{\"a}t\\
D-79104 Freiburg i. Br., Germany\\
E-mail: rolf.schneider@math.uni-freiburg.de

\end{document}